\numberwithin{equation}{section}
\newcommand{\on}{\operatorname}
\newcommand{\Z}{\mathbb{Z}}
\newcommand{\nc}{\newcommand}
\nc{\mc}{\mathcal}
\nc{\D}{\mathbb{D}}
\nc{\h}{\mathfrak{h}}
\nc{\g}{\mathfrak{g}}
\nc{\n}{\mathfrak{n}}
\nc{\ch}{\on{CH}}
\nc{\wt}{\widetilde}
\nc{\G}{\mc{G}}
\nc{\ov}{\overline}
\nc{\AutO}{\on{Aut}{\mc{O}}}
\nc{\AutmO}{\on{Aut}^{(m)} {\mc{O}}}
\nc{\VFun}{\on{Vect}(\fun)}
\nc{\FF}{\mathbb{F}}
\nc{\Amod}{$A$-{mod}}
\nc{\C}{\mathbb{C}}
\nc{\Pone}{\mathbb{P}^1}
\nc{\Aone}{\mathbb{A}^1}
\nc{\mt}{\widetilde{M}}
\nc{\Ak}{\mathbb{A}^k}
\nc{\xim}{\xi_m}
\renewcommand{\ss}{\widetilde{g}}
\nc{\J}{\mathcal{J}}
\nc{\Jtw}{\mathcal{J}^{g}}
\nc{\JX}{\mathcal{J}X}
\nc{\JtwX}{\mathcal{J}^{g}X}
\nc{\V}{V}
\nc{\vac}{{\bf 1}}
\nc{\End}{\on{End}}
\nc{\DerO}{\on{Der}_o \mc{O}}
\nc{\spec}{\on{Spec}}
\nc{\coinv}{H_{G,Z}(Y,\wt{y}_1, \cdots, \wt{y}_s)}
\nc{\cblocks}{C_{G,Z}(Y,\wt{y}_1, \cdots, \wt{y}_s)}
\nc{\ps}{\mathbb{C}[[z]]}
\nc{\psm}{\mathbb{C}[[z^{1/m}]]}
\theoremstyle{plain}
\newtheorem{theorem}{Theorem}[section]
\newtheorem{thm-defn}{Theorem/Definition}[section]
\newtheorem{lem-defn}{Lemma/Definition}[section]
\theoremstyle{definition}
\newtheorem{rmk}{Remark}[section]
\newtheorem{example}{Example}
\newtheorem{definition}{Definition}
\newtheorem*{theorem*}{Theorem}
\newtheorem*{corollary*}{Corollary}
\theoremstyle{remark}
\numberwithin{example}{section}
\numberwithin{definition}{section}
\begin{document}

 \title{Twisted modules and co-invariants for commutative vertex algebras of jet schemes }
  \author{ Matt Szczesny}
  \date{}

  \maketitle

\begin{abstract}
Let $Z \subset \mathbb{A}^k$ be an affine scheme over $\C$ and $\J Z$ its jet scheme. It is well-known that $\mathbb{C}[\J Z]$, the coordinate ring of $\J Z$, has the structure of a commutative vertex algebra. This paper develops the orbifold theory for $\mathbb{C}[\J Z]$.  A finite-order linear automorphism $g$ of $Z$ acts by vertex algebra automorphisms on $\mathbb{C}[\J Z]$.  We show that $\mathbb{C}[\J^g Z]$, where $\J^g Z$ is the scheme of $g$--twisted jets has the structure of a $g$-twisted $\mathbb{C}[\J Z]$ module. We consider spaces of orbifold coinvariants valued in the modules $\mathbb{C}[\J^g Z]$ on orbicurves $[Y/G]$, with $Y$ a smooth projective curve and $G$ a finite group, and show that these are isomorphic to $\mathbb{C}[Z^G]$. 
\end{abstract}

\section{Introduction}


Let $Z \subset \Ak$ be an affine scheme over $\C$, and $$\J Z := \on{Hom}_{Sch}(\spec \mathbb{C}[[t]], Z)$$ its jet scheme. It is well-known \cite{FBZ, BD} that the coordinate ring $\C[\J Z]$ has the structure of a commutative vertex algebra. Such vertex algebras often arise as quasiclassical limits of noncommutative vertex algebras, and have found a number of applications,  such as in the study of chiral differential operators and the invariant theory of vertex algebras \cite{A, AM, LSS}. This paper is devoted to the orbifold theory of the commutative vertex algebra $\C[\J Z]$, or more specifically, to the construction of twisted modules for $\C[\J Z]$ and coinvariants valued in such. 

Given a linear automorphism $g: Z \rightarrow Z$ of finite order $m$, we obtain an induced action on $\J Z$ and hence on $\C[\J Z]$ by vertex algebra automorphisms. We may also associate to this data the \emph{$g$--twisted jet scheme}
\[
\J^g Z := \{ x(t^{1/m}) \in \on{Hom}(\spec \C[[t^{1/m}]], Z)  \vert x(  e^{2 \pi i / m} t^{1/m}) = g(x(t^{1/m}) ) \}
\]
of g-equivariant jets. 
An abbreviated version of our result is the following : 

\begin{theorem*}[\ref{twisted_theorem}]
$\C[\J^g Z]$ carries the structure of $g$--twisted $\C[\J Z]$--module. 
\end{theorem*}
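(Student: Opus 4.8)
The plan is to reduce to the free case $Z=\Ak$ and then descend along the closed embedding $Z\hookrightarrow\Ak$. First I would diagonalize $g$: choose linear coordinates $x_1,\dots,x_k$ on $\Ak$ in which $g\cdot x_i=\zeta^{a_i}x_i$, with $\zeta=e^{2\pi i/m}$ and $0\le a_i<m$. Then $\C[\J\Ak]=\C[x_{i,n}:1\le i\le k,\ n\ge 0]$, with $\vac=1$ and translation operator fixed by $Tx_{i,n}=(n+1)x_{i,n+1}$, and the induced automorphism is $x_{i,n}\mapsto\zeta^{a_i}x_{i,n}$, so $x_{i,0}$ is a $g$-eigenvector of eigenvalue $\zeta^{a_i}$. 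On the twisted side, expanding a $g$-equivariant jet as $x_i(t^{1/m})=\sum_{r}y_{i,r}\,t^{r}$ and imposing $x_i(\zeta t^{1/m})=\zeta^{a_i}x_i(t^{1/m})$ forces $r\in\frac{a_i}{m}+\Z_{\ge 0}$, so $\C[\J^g\Ak]=\C[y_{i,r}]$ is again a polynomial ring. Since $g$ preserves the ideal $I$ of $Z$, the rings $\C[\J Z]$ and $\C[\J^g Z]$ are the evident quotients, compatibly with $T$, $g$, and the fractional grading.

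Next I would define the twisted state-field map on generators and extend by reconstruction. I set $Y_M(x_{i,0},z)$ to be multiplication by the twisted generating series $x_i(z):=\sum_{r\in\frac{a_i}{m}+\Z_{\ge0}}y_{i,r}\,z^{r}$, whose exponents lie in $\frac{a_i}{m}+\Z$, matching the monodromy required of the $g$-eigenvector $x_{i,0}$ by the twist-grading axiom. I then extend using $Y_M(Ta,z)=\partial_z Y_M(a,z)$ and the commutative product, so that $Y_M(x_{i,n},z)=\frac{1}{n!}\partial_z^{\,n}x_i(z)$ and $Y_M(x_{i_1,0}\cdots x_{i_\ell,0},z)$ is multiplication by $\prod_s x_{i_s}(z)$. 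Because every operator is multiplication by a fractional series of non-negative exponent (or a $z$-derivative thereof), the truncation condition is automatic, and the twisted analogue of the reconstruction (existence) theorem applies once pairwise twisted locality is in hand. The vacuum axiom $Y_M(\vac,z)=\on{id}$ and the grading condition are then immediate from the formula.

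The module axioms simplify dramatically because $\C[\J Z]$ is commutative. All the $Y_M(a,z)$ are multiplication (and $z$-derivative) operators, hence commute pairwise, so the ``commutator'' half of the twisted Jacobi identity is trivial and the entire content is twisted associativity, $Y_M(a,z_1)Y_M(b,z_2)=Y_M(Y(a,z_1-z_2)b,z_2)$ in the appropriate expansion. For generators this unwinds, via $Y(x_{i,0},z_0)x_{j,0}=(e^{z_0T}x_{i,0})x_{j,0}=x_i(z_0)\,x_{j,0}$, to the single identity $\sum_n\frac{(z_1-z_2)^n}{n!}\,\partial_{z_2}^{\,n}x_i(z_2)=x_i(z_1)$, i.e.\ the statement that $x_i(z_1)$ is the Taylor expansion of $x_i\bigl(z_2+(z_1-z_2)\bigr)$ about $z_2$. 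Since $x_i$ carries fractional exponents, this is precisely the binomial identity $z_1^{\,r}=\sum_n\binom{r}{n}z_2^{\,r-n}(z_1-z_2)^n$ for $r\in\frac{a_i}{m}+\Z$, valid for $\abs{z_1-z_2}<\abs{z_2}$; matching the two orders of expansion against the $\zeta$-twisted formal delta function then yields the twisted Jacobi identity.

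The step I expect to be the main obstacle is exactly this fractional delta-function calculus: organizing the binomial expansions with exponents in $\frac{a_i}{m}+\Z$ so that the $g$-twist is correctly encoded in the fractional prefactor $\bigl(\frac{z_1-z_0}{z_2}\bigr)^{a_i/m}$ of the twisted Jacobi identity, and confirming that the three regions of expansion patch as in the untwisted case. The remaining point, descent to the quotients, is comparatively clean: for $f\in I$ the operator $Y_M(f,z)$ is multiplication by $f(x_1(z),\dots,x_k(z))$, which vanishes on $\C[\J^g Z]$ because a $g$-equivariant jet is by definition valued in $Z$; applying $\partial_z$ shows the whole differential ideal cutting out $\J Z$ acts by zero, and multiplication operators automatically preserve the ideal of $\J^g Z$, so $Y_M$ passes to the quotient and lands in $\C[\J^g Z][[z^{1/m}]]$.
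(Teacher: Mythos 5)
Your proposal is correct and follows essentially the same route as the paper: reduce to the free case $\Ak$, assign to $x_{i,0}$ multiplication by the twisted generating series, extend via $Y_M(Ta,z)=\partial_z Y_M(a,z)$ and multiplicativity, and descend to the quotients by observing that $Y_g(P_{i,0},z^{1/m})=P_i(x_1(z^{1/m}),\dots,x_k(z^{1/m}))$ has coefficients in the ideal cutting out $\J^g Z$. The only difference is that where you sketch a direct verification of the twisted associativity via the fractional binomial expansion, the paper simply cites the twisted reconstruction theorem of Li \cite{L} for the free case.
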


Suppose now that $Y$ is a smooth projective curve with an effective action of the group $G$. We proceed to study the space of coinvariants for the vertex algebra $\C[\J Z]$ on the orbicurve (or stacky curve) $[Y/G]$. We follow the approach of \cite{FSz}, which entails defining coordinate-independent versions of twisted vertex operators as sections of an certain sheaf on $[Y/G]$. More precisely, we use the $g$--twisted module structure on $\C[\J^g Z]$ to produce an equivariant section $\mc{Y}_y$ near the point $[y/ \langle g \rangle]$. Using the sections $\mc{Y}_y$, we define a space of coinvariants $\coinv$ for the vertex algebra $\C[\J Z]$ valued in the twisted modules $\C[\J^g Z]$. Our result is as follows:

\begin{theorem*}[\ref{coinv_theorem}]
$\coinv$ is isomorphic to $\C[Z^G]$ - the coordinate ring of the fixed-point set of $G$ on $Z$,
\end{theorem*}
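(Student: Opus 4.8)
The plan is to prove the sharper statement that $\coinv$ is naturally isomorphic to the ring of functions on the scheme of $G$--equivariant maps $Y \to Z$, and then to identify the latter with $Z^G$. Since $Y$ is projective, connected and reduced while $Z \subset \Ak$ is affine, any morphism $Y \to Z$ is constant: each of the $k$ coordinates pulled back from $\Ak$ is a global regular function on $Y$, hence a scalar. A constant map $Y \to Z$, $y \mapsto z_0$, is $G$--equivariant precisely when $z_0 = g \cdot z_0$ for all $g \in G$, i.e.\ when $z_0 \in Z^G$. Thus $\on{Hom}_G(Y,Z) \cong Z^G$ and its coordinate ring is $\C[Z^G]$; the content of the theorem is the identification of the orbifold coinvariants with these global equivariant jets.

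The first technical step is to exploit commutativity. Because $\C[\J Z]$ is a commutative vertex algebra, all operator product expansions are regular, so the Lie algebra of (twisted) vertex-operator-valued sections used to define $\coinv$ has vanishing bracket and acts through an abelian quotient; consequently $\coinv$ is simply the quotient of $\bigotimes_i \C[\J^{g_i} Z]$ by the linear span $\mathfrak{L}$ of the global relations, rather than by an ideal. Using the embedding $Z \subset \Ak$, I would reduce to tracking the coordinate fields $x^1, \dots, x^k$ and their translates $T^n x^j$, which generate $\C[\J Z]$. The jets of the defining ideal $I$ of $Z$ cut out $\C[\J^{g_i} Z]$ inside the free twisted modules attached to the marked points $\wt{y}_i$, so it suffices to treat the free case $Z = \Ak$ and then impose the jets of the relations, which are respected by all of the constructions.

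The heart of the argument is to make $\mathfrak{L}$ explicit via the strong residue theorem on $Y$. Each element of $\mathfrak{L}$ comes from a $G$--equivariant global section of the sheaf built from the sections $\mc{Y}_{y}$ of \cite{FSz}, regular away from the $\wt{y}_i$; its contribution to the coinvariants is the sum over marked points of the residues of the associated twisted vertex operators, and this sum vanishes because the total residue of a global meromorphic differential on $Y$ is zero. Running this over all such sections identifies the jet data at the various points with one another and forces any surviving class to arise from a \emph{global} equivariant section of the jet sheaf on $[Y/G]$. Projectivity of $Y$ then kills all positive-weight (non-constant) modes, since the only global regular functions on $Y$ are constants, leaving only the constant, $t$--independent jet, i.e.\ a point of $Z$. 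The twisting conditions $x(e^{2\pi i/m} t^{1/m}) = g_i(x(t^{1/m}))$ defining $\C[\J^{g_i} Z]$ force this constant to be fixed by each local monodromy $g_i$, hence by all of $G$, landing it in $Z^G$. The surviving classes are therefore represented by functions of the constant jet alone, i.e.\ by elements of $\C[Z^G]$, yielding $\coinv \cong \C[Z^G]$.

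The main obstacle is carrying out this residue computation on the orbicurve, where the twisted modules involve fractional powers $t^{1/m}$ and the bookkeeping of which twisted modes survive is delicate. Concretely, I expect the difficulty to lie in showing that the relations in $\mathfrak{L}$ are neither too few nor too many: surjectivity onto $\C[Z^G]$ requires that every $G$--fixed constant jet descends to a nonzero coinvariant (a nonvanishing of the relevant pairing, ultimately an $H^0$ statement), while injectivity requires that $\mathfrak{L}$ already contains enough relations to collapse every positive mode and every non-$G$--fixed direction. Establishing the exact matching between the equivariant residue conditions at the branch points and the linear equations cutting out $Z^G \subset Z$ is the crux, and is precisely where the effectiveness of the $G$--action and the explicit form of the sections $\mc{Y}_y$ must be used.
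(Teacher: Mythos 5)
Your global picture is the right one --- coinvariants should be functions on $G$--equivariant maps $Y \rightarrow Z$, which by projectivity of $Y$ and affineness of $Z$ are constant maps into $Z^G$ --- and this matches the idea underlying the paper. But the route you propose leaves the actual proof undone, and one of its steps fails. The undone part is the one you yourself flag as the crux: the explicit residue computation showing that the relations coming from $\mc{A}_{out}$ are ``neither too few nor too many.'' The paper never performs this computation. Instead it uses the fact that $\coinv$ is a quotient of a commutative ring by an \emph{ideal} (Remark \ref{coinv_is_algebra}), hence itself a commutative $\C$--algebra, and studies its closed points: a character $\varphi$ of $\coinv$ is in particular a conformal block, hence determines a $G$--invariant \emph{horizontal} section $\omega_{\varphi}$ of $\mc{A}^*$ over $Y^{\circ}$; since a character of the fiber $\mc{A}_y$ is a point of $\on{Hom}(\D_y,Z)$ and horizontality glues these jets into an actual map, one gets $h_{\varphi}: Y^{\circ} \rightarrow Z$, which extends over the marked points because $Y_g(p,z^{1/m})$ has no negative powers of $z^{1/m}$ for $p \in \C[Z] \subset A_{\infty}$, and is then constant and $G$--equivariant. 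Your decision to treat the quotient as being by a linear span rather than an ideal discards exactly the ring structure that makes this soft argument available, and forces you into the mode-by-mode bookkeeping you then do not carry out.

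The step that is actually wrong is the claim that the constant jet is forced into $Z^G$ by ``the twisting conditions $x(e^{2\pi i/m}t^{1/m}) = g_i(x(t^{1/m}))$,'' i.e. by the local monodromies at the marked points. The stabilizers $G_{y_j}$ need not generate $G$: in the extreme case where $G$ acts freely on $Y$ there are no stacky points at all, every insertion is an untwisted module, and the local conditions impose nothing --- your mechanism would then yield $\C[Z]$ rather than $\C[Z^G]$. The constraint to $Z^G$ must come from the \emph{global} $G$--invariance of the section over $Y^{\circ}$ (equivalently, the $G$--equivariance $h_{\varphi}(g\cdot y) = g\cdot h_{\varphi}(y)$ of the resulting constant map), which is how the paper concludes. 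Your opening paragraph states this correctly for abstract equivariant maps, but the executed argument substitutes the insufficient local condition for it.
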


\noindent When $G$ is the trivial group, and there are no twisted module insertions, the space of coinvariants is simply $\mathbb{C}[Z]$, which recovers a result proven in section 9.4.4 of \cite{FBZ}. 

The outline of the paper is as follows. In section \ref{va} we recall some basics on vertex algebras and their twisted modules. Section \ref{jet_schemes} reviews the construction of jet schemes. In section \ref{twisted_mods} we prove Theorem \ref{twisted_theorem}. Finally, in section \ref{coinvariants} we recall the coordinate-independent construction or orbifold coinvariants from \cite{FSz}, and prove Theorem \ref{coinv_theorem}. 

\medskip
\noindent {\bf Acknowledgments:} The author gratefully acknowledges the support of a Simons Foundation Collaboration Grant, as well the hospitality of the Perimeter Institute where part of this work was completed.

\section{Vertex algebras and twisted modules} \label{va}

In this section, we recall some basic definitions regarding vertex algebras and their twisted modules. We refer the reader to \cite{FBZ,K} for further information regarding vertex algebras. 

\begin{definition}
A {\em vertex algebra} is a vector space $\V$ equipped with:
\begin{itemize}
\item a linear map
\begin{align*}
Y: \V & \rightarrow End(\V)[[z,z^{-1}]] \\
     a & \rightarrow Y(a,z) = \sum_{n \in \mathbb{Z}} a_{(n)} z^{-n-1}
\end{align*}
\item a vector $\vac \in \V$, called the {\em vacuum vector},
\item a linear operator $T: \V \rightarrow \V$,
called the {\em translation operator}.
\end{itemize}
which are required to satisfy the following properties:
\begin{enumerate}
 \item $Y(Ta, z)=\partial_z Y(a,z)$,
\item $Y(\vac,z)=id_{\V}$,
\item $Y(a,z)\vac \in \V[[z]]$ and 
$a_{(-1)}\vac=a$,
\item For $m,n, k \in \mathbb{Z}$
\begin{align*}
& \sum_{j\geq 0}\begin{pmatrix}
		m\\j
	       \end{pmatrix}(a_{(n+j)}b)_{(m+k-j)}\\
&=
\sum_{j\geq 0}(-1)^j \begin{pmatrix}
		      n\\ j
		     \end{pmatrix}
\left(a_{(m+n-j)}b_{(k+j)}-(-1)^n b_{(n+k-j)}a_{(m+j)}\right).
\end{align*}
\end{enumerate}
\end{definition}

\begin{example}[Commutative Vertex Algebras] \label{comm_va}
Let $A$ be a commutative algebra over $\C$ equipped with a derivation $T_A$. We may give $A$ the structure of vertex algebra by taking $\V = A$, $T=T_A$, $\vac=1_A$, and defining
\[
Y(a,z) = e^{zT}(a) = \sum_{n \geq 0} \frac{z^n}{n !} T^n (a) 
\]
Conversely, given any vertex algebra $V$ such that $Y(a,z) \in V[[z]], \forall a \in V $, the operation $ab := Y(a,z) b \vert_{z=0}$ makes $V$ into a commutative algebra with derivation $T$. We note that for commutative vertex algebras, $Y$ is multiplicative, i..e $$ Y(ab,z) = Y(a,z) Y(b,z) .$$ \qedsymbol
\end{example}

A { \em vertex algebra automorphism} consists of a linear map $g: V \rightarrow V$ such that $$ Y(g(a),z) = g Y (a,z) g^{-1} \; \forall a \in V,$$ or equivalently, such that $$g(a)_{(n)} g(b) = g(a_{(n)} b) \;  \forall a,b \in V.$$ For an automorphism $g$ of $V$ of finite order $m$,
set $$V^r=\{u\in V\ |\ gu=\zeta_m^{r}u\}, \;  0\leq r\leq m-1, $$ where $\zeta_m = \exp{(2 \pi i/m)}$.
We recall the definition of $g$-twisted $V$-modules:

\begin{definition}\label{def:weak-twisted}
Let $g$ be an automorphism of $V$ of order $m$. A $g$-twisted $V$-module $M$ is a vector space equipped with a
linear map
\begin{align*}
Y_M: \V & \rightarrow End(M)[[z^{1/m},z^{-1/m}]] \\
     a & \rightarrow Y_M (a,z) = \sum_{n \in \frac{1}{m}\mathbb{Z}} a_{(n)} z^{-n-1}
\end{align*}
which satisfies the following conditions:

\begin{enumerate}
\item $Y_M(a,z^{1/m}) = \sum_{i \in r/p+\Z}u_i z^{-i-1}$ for $a \in V^r$.
\item $Y_M(a,z^{1/m})v\in M((z^{1/m}))$ for $a \in V$ and $v \in M$.
\item $Y_M({\mathbf 1},z^{1/m}) = \mathrm{id}_M$.
\item For $a \in V^r$, $b \in V^{s}$, 
$m\in r/T+\Z,\ n\in s/T+\Z$, and $l\in\Z$,
\begin{align*}
&\sum_{i=0}^{\infty}\binom{m}{i}
(a_{l+i}b)_{m+n-i} \\
& = 
\sum_{i=0}^{\infty}\binom{l}{i}(-1)^i
\big(a_{l+m-i}b_{n+i}+(-1)^{l+1}b_{l+n-i}a_{m+i}\big).
\end{align*}
\end{enumerate}
\end{definition}

We note for future reference that the property 
\begin{equation} \label{twisted_derivative}
Y_M (Ta,z^{1/m}) = \partial_z Y_M(a,z^{1/m})
\end{equation}
holds in any twisted modulle $M$. 

\begin{rmk} \label{multiplicativity}
It follows from property $(4)$ above by taking $l=-1$ that if $V$ is a commutative vertex algebra as in Example \ref{comm_va}, then $Y_M$ is multiplicative, i.e.
$$ Y_M (ab,z^{1/m}) = Y_M (a,z^{1/m})Y(b,z^{1/m}).$$
\end{rmk}

\section{Jet schemes} \label{jet_schemes}

Let $Z \subset \Ak$ be an affine scheme. We write $Z = \spec (A)$, where
\[
A = \C[x_1, \cdots, x_k] / (P_1, \cdots, P_r)
\]
for some polynomials $P_1, \cdots, P_r \in \C[x_1, \cdots, x_k]$. Recall that the {\em jet scheme} of $Z$ is the scheme $\J Z$ defined by the property 
\[
\on{Hom}_{Sch}(\spec R, \J Z) = \on{Hom}_{Sch} (\spec R[[t]], Z)
\]
for any commutative $\C$--algebra $R$. $\J Z$ therefore represents the space of maps from the formal disk $D=\spec \C[[t]]$ to $Z$. Writing a map $D \rightarrow \Ak$ as
\[
x_{i}(t) = \sum_{n \leq 0} x_{i,n} t^{-n}, \; 1 \leq i \leq k,  
\]
$\J Z$ may be explicitly described as $\spec A_{\infty}$, where 
\begin{equation} \label{funct_jet_sch}
A_{\infty} = \C [x_{1,n}, x_{2,n}, \cdots, x_{k,n}]_{n \leq 0} / (P_{1,n}, \cdots, P_{r,n})
\end{equation}
and 
\begin{equation} \label{Pin}
P_{i,n}= \frac{\partial^n_t}{n!} P_i (x_{1}(t), x_2 (t), \cdots, x_k (t)) \vert_{t=0} 
\end{equation}

Identifying the variables $x_i$ with $x_{i,0}$ we obtain a $\C$--algebra homomorphism $A \rightarrow A_{\infty}$ which is dual to the canonical projection \[ \mu: \J Z \rightarrow Z \] that evaluates a jet at $t=0$. 

Suppose now that $g: \Ak \rightarrow \Ak$ is a linear automorphism of order $m$. After a linear change of coordinates we may diagonalize $g$ such that its action is given by 
\begin{equation} \label{sigma_action}
 g(x_i) := \xim^{\alpha_i} x_i 
\end{equation}
with $\xim=\exp(2 \pi i / m)$. Let $$\Jtw \Ak = \{ x(t^{1/m}) \in \on{Hom}(\spec \C[[t^{1/m}]], \Ak)  \vert x( \xim t^{1/m}) = g(x(t^{1/m}) ) \}. $$ We refer to $\J^g \Ak$ as the scheme of $g$--twisted jets to $\Ak$. It is the closed subscheme of $\J \Ak = \on{Hom}(\spec \C[[t^{1/m}]], \Ak) $ consisting of fixed points under the action of $g$ by $g(x(t^{1/m})) :=  g^{-1} x( \xim t^{1/m})$. Writing $$x_i (t) = \sum_{n \in \frac{1}{m }\mathbb{Z}_{\leq 0}} x_{i,n}t^{-n}$$ we see that 
$$ \Jtw \Ak = \spec \C[x_{i,n}], \; \; i=1 \cdots k, n \in \frac{\alpha_i}{m}+\mathbb{Z}, n \leq 0. $$ 

Suppose furthermore that the action of $g$ on $\Ak$ preserves the affine scheme $Z$ above, or, in other words, that $g$ preserves the ideal $(P_1, \cdots, P_r)$. We may then consider the scheme $\Jtw Z$ of $g$--twisted jets to $Z$, where 
\[
\Jtw Z :=  \{ x(t^{1/m}) \in \on{Hom}(\spec \C[[t^{1/m}]], Z)  \vert x( \xim t^{1/m}) = g(x(t^{1/m}) ) \}
\]
We can write $\Jtw Z = \spec A^{\sigma}_{\infty}$,  where 
\begin{equation} \label{funct_tw_jet_sch}
A^{g}_{\infty} = \C [x_{1,n}, x_{2,n}, \cdots, x_{k,n}]_{} / (P^{g}_{1,n}, \cdots, P^{g}_{k,n}) \; \; \;   n \in \frac{\alpha_i}{m}+\mathbb{Z}, n \leq 0.
\end{equation}
and $P^{g}_{i,n}$ is the coefficient of $t^{n}$ in $P_i (x_{1}(t), x_2 (t), \cdots, x_k (t)) $

\section{Twisted modules from twisted jets} \label{twisted_mods}

Let $Z \subset \Ak$ be an affine scheme. The algebra $A_{\infty} = \mathbb{C}[\J Z]$ from (\ref{funct_jet_sch}) is equipped with a derivation $T$ defined by 
\[
T\cdot x_{i,n} = - (n-1) x_{i,n-1}
\]
We can write the polynomials $P_{i,n}$ in \ref{Pin} as
$$
P_{i,n} = \frac{T^{n}}{n!} P_{i,0}
$$
and thereby write
\begin{equation}
A_{\infty} = \mathbb{C}[\J Z] = \mathbb{C}[\J \Ak]/ (T^n P_{i,0}), \; \; 1 \leq i \leq r, n \geq 0.
\end{equation}
As explained in Example \ref{comm_va} $(A_{\infty}, T)$ carries a commutative vertex algebra structure
\begin{align*}
Y: A_{\infty} & \rightarrow \on{End}(A_{\infty})[[z,z^{-1}]] \\
Y(a,z) & = e^{zT} (a) =  \sum_{n \geq 0 } \frac{T^n(a)}{n!} z^{n}
\end{align*}

Let $g$ be an automorphism of $\Ak$ of finite order $m$ inducing an automorphism of $Z \subset \Ak$. $g$ acts on $\J Z$ by sending $x(t) \in \J Z$ to  $gx(t) := g(x(t))$, inducing an algebra automorphism $\ss$ of $A_{\infty}$ determined by $\ss(x_{i,n}) := g(x_{i})_n $. After diagonalizing $g$ as in \ref{sigma_action}, this action is given by $\ss (x_{i,n}) = \xim^{\alpha_i} x_{i,n}$. $\ss$ commutes with $T$, inducing a vertex algebra automorphism of $A_{\infty}$. Let $A^{g}_{\infty} = \mathbb{C}[\J^{g} Z] $ as in \ref{funct_tw_jet_sch}. 

\begin{theorem} \label{twisted_theorem}
$A^{g}_{\infty}$ carries the structure of a $\ss$--twisted $A_{\infty}$ vertex algebra module given by the assignment
\begin{equation}
Y_{g}: A_{\infty} \rightarrow \on{End}A^{g}_{\infty} [[ z^{1/m}, z^{-1/m} ]]
\end{equation} where
\begin{equation} \label{field_1}
Y_g (x_{i,0},z^{1/m})  = \sum_{ n \in \frac{\alpha_i}{m}+\mathbb{Z}, n \leq 0  } x_{i,n} z^{-n} \\
\end{equation} and
\begin{equation} \label{field_2}
Y_g (x_{i_1,n_1} \cdots x_{i_s,n_s}, z^{1/m}) := \prod^{s}_{j=1} \partial^{-n_j}_{z} Y(x_{i_j,0}, z^{1/m})
\end{equation}
\end{theorem}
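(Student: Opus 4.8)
The plan is to exploit that every field defined by (\ref{field_1})--(\ref{field_2}) acts on $A^g_\infty$ by multiplication by an element $\Phi_a(z)\in A^g_\infty[[z^{1/m}]][z^{-1/m}]$, so that all fields, and all of their modes, are mutually commuting multiplication operators. For the generators, $\Phi_{x_{i,0}}(z)=\sum_{n\in\alpha_i/m+\Z,\ n\le0}x_{i,n}z^{-n}=:X_i(z)$. First I would observe that, since $x_{i,n}=\tfrac{1}{(-n)!}T^{-n}x_{i,0}$ in $A_\infty$, the translation-covariance property (\ref{twisted_derivative}) forces $Y_g(x_{i,n},z^{1/m})=\tfrac{1}{(-n)!}\partial_z^{-n}X_i(z)$ --- so the derivatives in (\ref{field_2}) must be read as divided powers --- and multiplicativity (Remark \ref{multiplicativity}) then forces the product formula. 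I would therefore take these two properties as defining $Y_g$ on monomials, first over $Z=\Ak$, where $A_\infty=\C[\J\Ak]$ and $A^g_\infty=\C[\Jtw\Ak]$ are polynomial rings with no relations, and then descend to general $Z$.

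The conceptually central step is the descent from $\Ak$ to $Z$: I must check that $Y_g$ descends to a map $A_\infty\to\on{End}A^g_\infty[[z^{\pm1/m}]]$, i.e. that it annihilates the defining ideal $(T^nP_{i,0})$ of $A_\infty$ and takes values in operators on the quotient $A^g_\infty$. The key computation is that, by multiplicativity, $Y_g(P_{i,0},z^{1/m})$ is multiplication by $P_i(X_1(z),\dots,X_k(z))=P_i(x_1(z),\dots,x_k(z))$, whose coefficients are precisely the generators $P^g_{i,n}$ of the ideal cutting out $\Jtw Z$ inside $\Jtw\Ak$ in (\ref{funct_tw_jet_sch}); hence $Y_g(P_{i,0},z^{1/m})$, and with it $Y_g(T^nP_{i,0},z^{1/m})=\partial_z^nY_g(P_{i,0},z^{1/m})$, acts by $0$ on $A^g_\infty$. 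Moreover, being multiplication operators, all the $Y_g(a,z^{1/m})$ preserve the ideal $(P^g_{i,n})$ and so descend to the quotient. This is exactly where the hypothesis that $g$ preserves $Z$ (the ideal $(P_1,\dots,P_r)$) enters. Granting this, conditions (1)--(3) of Definition \ref{def:weak-twisted} are bookkeeping: (3) holds since $\vac=1$ is the empty product; (1) holds because a monomial $x_{i_1,n_1}\cdots x_{i_s,n_s}$ lies in the $\ss$-eigenspace for $r\equiv\alpha_{i_1}+\cdots+\alpha_{i_s}$ and its field, a product of the $X_{i_j}$ and their derivatives, carries exactly the powers $z^{-j-1}$ with $j\in r/m+\Z$; and (2) holds because each such field has only finitely many negative powers of $z^{1/m}$.

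The main obstacle is condition (4), the twisted Borcherds identity, which I would verify in its equivalent generating-function form, the twisted Jacobi identity, where for $a$ in the $r$-eigenspace the iterate term $Y_g(Y(a,z_0)b,z_2)$ is weighted by the fractional factor $\big((z_1-z_0)/z_2\big)^{-r/m}$. Because the fields are commuting multiplication operators, the two operator orderings $Y_g(a,z_1)Y_g(b,z_2)$ and $Y_g(b,z_2)Y_g(a,z_1)$ are multiplication by the \emph{same} element $\Phi_a(z_1)\Phi_b(z_2)$, merely expanded in the regions $|z_1|>|z_2|$ and $|z_2|>|z_1|$; and by multiplicativity together with translation covariance the iterate is multiplication by $\Phi_a(z_2+z_0)\Phi_b(z_2)=\big(e^{z_0\partial_{z_2}}\Phi_a(z_2)\big)\Phi_b(z_2)$. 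The identity therefore collapses to the formal $\delta$-function identity $z_0^{-1}\delta\!\big(\tfrac{z_1-z_2}{z_0}\big)-z_0^{-1}\delta\!\big(\tfrac{z_2-z_1}{-z_0}\big)=z_2^{-1}\delta\!\big(\tfrac{z_1-z_0}{z_2}\big)$, twisted by the fractional power and applied coefficientwise to $\Phi_a$ and $\Phi_b$; the exponent $-r/m$ is exactly what reconciles the fractional powers $z_1^{-n}$, $n\in r/m+\Z$, in $\Phi_a(z_1)$ with their Taylor expansion about $z_1=z_2$. The only genuine care required is in handling these fractional powers and the consistent choice of expansion regions --- once the fields are recognized as commuting multiplications, no nontrivial vertex-algebra commutator survives, and (4) reduces to this standard $\delta$-function manipulation. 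Finally, faithfulness of the multiplication action of $A^g_\infty$ on itself promotes the resulting equalities of elements to the required operator identities.
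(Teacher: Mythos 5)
Your proposal is correct and follows the same overall architecture as the paper --- establish the twisted module structure for the free case $Z=\Ak$, then descend to general $Z$ --- and your descent step is essentially identical to the paper's: both hinge on the computation that $Y_g(P_{i,0},z^{1/m})$ is multiplication by $P_i(x_1(z^{1/m}),\dots,x_k(z^{1/m}))$, whose coefficients are the generators $P^g_{j,n}$ of the ideal cutting out $\J^g Z$ inside $\J^g\Ak$, so that the fields both annihilate the relations of $A_\infty$ and preserve the ideal defining $A^g_\infty$. Where you genuinely diverge is in the free case: the paper simply invokes a twisted version of Li's reconstruction theorem from \cite{L} to get the $\ss$-twisted module structure on $\C[\J^g\Ak]$ with the generator assignment (\ref{field_1}), whereas you verify the axioms of Definition \ref{def:weak-twisted} directly, exploiting that all fields are mutually commuting multiplication operators valued in $A^g_\infty[[z^{1/m}]]$, so that the twisted Borcherds/Jacobi identity collapses to the standard formal $\delta$-function identity decorated by the fractional weight $\bigl((z_1-z_0)/z_2\bigr)^{-r/m}$. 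Your route is more self-contained and makes transparent exactly why no nontrivial commutator terms arise in the commutative setting, at the cost of having to handle the fractional-power expansion conventions by hand; the paper's route is shorter but leaves that verification to the cited reference. Your observation that the derivatives in (\ref{field_2}) must be read as divided powers $\tfrac{1}{(-n_j)!}\partial_z^{-n_j}$ (since $x_{i,n}=\tfrac{T^{-n}}{(-n)!}x_{i,0}$ under the normalization $T\cdot x_{i,n}=-(n-1)x_{i,n-1}$) is a correct reading of a normalization the paper leaves implicit, and is needed for consistency with (\ref{twisted_derivative}).
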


\begin{proof} 
It follows from a twisted version of the reconstruction theorem for vertex operators in \cite{L} that $\C[\J^g \Ak ]$ has the structure of $\ss$-twisted $\C[\J \Ak]$--module with the field assignment \ref{field_1}. It then follows from the multiplicativity of $Y_g$ (see Remark \ref{multiplicativity} ) and the property \ref{twisted_derivative} that 
$Y_g$ is defined by \ref{field_2} on general elements of $\mathbb{C}[\J \Ak]$. It remains to show that the twisted module structure descends to the quotients $A_{\infty}, A^g_{\infty}$ . We have
\begin{align*}
Y_g(P_{i,n}, z^{1/m})= Y_g(\frac{T^{n}}{n!} P_{i,0}, z^{1/m}) & =  \frac{\partial^{n}_z}{n!} Y_g(P_{i,0},z^{1/m}) \\ &= \frac{\partial^{n}_z}{n!} P_i(x_1 (z^{1/m}), ... , x_k (z^{1/m}))
\end{align*}
It follows that if $P$ lies in the ideal generated by the $P_{i,n}$, then the coefficients of the field $Y_g(P,z^{1/m})$ lie in the ideal generated by the $P^{g}_{j,m}$, hence that $Y_g$ induces a well-defined structure on $A^{g}_{\infty}$ as a $\ss$--twisted $A_{\infty}$--module.  
\end{proof}

\subsection{Quasi-conformal structure} \label{quasiconf_structure}

We recall the quasi-conformal structure on the vertex algebra $A_{\infty}$ following \cite{FBZ}. It will be used to define coordinate-independent versions of twisted vertex operators in the next section.  
Let $\AutO$ denote the group of algebra automorphisms of $\ps$.  An automorphism $\rho \in \AutO$ is determined by where it sends the generator $z$, and can be written as
\[
\rho(z) = a_1 z + a_2 z^2 + \cdots
\]
where $a_1 \neq 0$.   We think of $\AutO$ as the automorphism group of the formal disk $D=\on{spec} \ps$ preserving the origin. The Lie algebra of $\AutO$ is spanned by the vector fields $L_n=-z^{n+1} \partial_z, \; n \geq 0$, satisfying the commutation relations
\[
[L_m, L_n] = (n-m) L_{n+m}
\]
The action of $\AutO$ on $\spec \ps$ induces an action on the jet scheme $\J Z$ and hence on $A_{\infty} = \mathbb{C}[\J Z]$.  The action of $Lie(\AutO)$ may be written explicitly as
\[
L_m \rightarrow \sum^k_{i=1} \sum_{n < 0}  -n x_{i,n}  \frac{\partial}{\partial x_{i,n-m}}
\]
Consider now the map
\[
f_m : \spec \psm \rightarrow \spec \ps
\]
induced by the inclusion $\ps \subset \psm$. We think of $\spec \psm$ as an $m$-th order ramified cover of $\spec \ps$. Let  $\AutmO$ be the group of automorphisms of $\psm$ preserving the subring $\ps$.  $\psi \in \AutmO$ may be written as
\[
\psi(z^{1/m}) = \sum_{n \geq 0} a_{\frac{1}{m}+n} z^{\frac{1}{m}+n} 
\]
where $a_{\frac{1}{m} \neq 0}$. $\AutmO$ can be thought of as the group of $m$-th roots of coordinate changes on $\spec \ps$. There is a short exact sequence of groups
\[
1 \rightarrow \mathbb{Z}/m \mathbb{Z} \rightarrow \AutmO \overset{h_m}{\rightarrow} \AutO \rightarrow 1
\]
where $h_m (\psi)(z)  = (\psi(z^{1/m}))^m$. The Lie algebra of $\AutmO$ is spanned by the vector fields $\wt{L}_n = -z^{1/m + n} \partial_{z^{1/m}}, \; n \geq 0$, and $h_m$ induces a Lie algebra isomomorphism
\begin{align*}
h_{m,*}: Lie(\AutmO) & \rightarrow Lie(\AutO) \\
\wt{L}_n &\rightarrow  m L_n
\end{align*} 
The action of $\AutmO$ on $\spec \psm$ induces an action on $\Jtw Z$ and hence on $A^g_{\infty}$. The action of $Lie(\AutmO)$ may be written explicitly as
\[
\wt{L}_r \rightarrow \sum^k_{i=1} \sum_{n \in \alpha_i/m + \mathbb{Z} ,n < 0} -nm x_{i,n}  \frac{\partial}{\partial x_{i,n-r}}
\]

\section{Coinvariants and conformal blocks} \label{coinvariants}

In this section, we study the spaces of coinvariants and conformal blocks for the vertex algebra $A_{\infty}$ on a stacky curve (i.e. orbicurve) with values in twisted modules of type $A^{g}_{\infty}$. We begin by briefly recalling the definition and construction of these spaces following the approach in \cite{FSz}, where we refer the reader for details.

Let $Y$ be a smooth complex projective curve. Given a point $y \in Y$, we denote by $\mc{O}_y$ the local ring at $y$, $\widehat{\mc{O}}_y$ its completion, and $\widehat{m}_y \subset \widehat{\mc{O}}_y$ the maximal ideal. We refer to a generator of $\widehat{m}_y$ as a \emph{formal coordinate} at $y$. Given such a formal coordinate $z$ at $y$, we may canonically identify $\widehat{\mc{O}}_y$ with $\mathbb{C}[[z]]$, and $\widehat{m}_y$ with $z \mathbb{C}[[z]]$. Let 
\[
\widehat{Y} = \{ (y, t_y) \vert y \in Y, t_y \in  \widehat{\mathcal{O}}_y,  (t_y) = \widehat{m}_y \}
\]
be the bundle of formal coordinates over $Y$. $\widehat{Y}$ is an $\AutO$--principal bundle, where the latter acts by changes of formal coordinates. $\AutO$ acts on $A_{\infty}$ as explained in  section \ref{quasiconf_structure}, and we may therefore form the associated bundle
\begin{equation} \label{ass_bundle}
\mc{A} = \widehat{Y} \underset{\AutO}{\times} A_{\infty}
\end{equation}

Given an open subset $U \subset Y$ equipped with an etale coordinate $f: U \rightarrow \mathbb{C}$, we obtain a section, hence trviailization
\begin{align*}
j_f : U & \rightarrow \widehat{Y}\vert_{U} \\
y & \rightarrow (y,f-f(y))
\end{align*}
This induces a trivialization of $\mc{A}$
\begin{align*}
\wt{j}_f: U \times A_{\infty} & \rightarrow \mathcal{A} \\
(y,p(x_{i,n})) & \rightarrow [y,f-f(y), p(x_{i,n}) ]
\end{align*}
where the notation $[y,f-f(y), p(x_{i,n}) ]$ is used to denote the equivalence class of element of $\mc{A}$ whose $\widehat{Y}$ component is $(y,f-f(y))$ and whose $A_{\infty}$ component is $p(x_{i,n})$. 

$\mc{A}$ is a bundle of commutative algebras over $Y$, and denoting by $\mc{A}_y$ the fiber of $\mc{A}$ at $y \in Y$, and $\spec \widehat{\mc{O}}_y$ by $\D_y$ we have a canonical identification 
\[
\spec \mc{A}_y = \on{Hom}(\D_y, Z)
\]
The relative spectrum $\spec \mathcal{A} \rightarrow Y$ is therefore identified with the bundle of jets of sections of $Y \times Z \rightarrow Y$, which we denote $\J_{Z \times Y/Y}$. $\mc{A}$ carries a canonical flat connection, which in a local coordinate $z$ is given by $\nabla_{\partial_z} = \partial_z - T$. Horizontal sections of $\mc{A}$ over $U \subset Y$ are holomorphic maps $U \rightarrow Z$, and since $Y$ is projective and $Z$ affine, global horizontal sections are constant maps to $Z$. 

\begin{rmk} \label{subring_section}
Note that the subring $A \subset A_{\infty}$ generated by the images of $x_{i,0}$ forms a trivial $\AutO$ sub-representation of $A_{\infty}$. Forming the associated bundle \ref{ass_bundle} and taking relative spectra, we obtain the canonical map $$ \mu: \J_{Z \times Y/Y} \rightarrow Z $$ which associates to the jet of a section $\phi \in \on{Hom}(\D_y, Z)$ its value  $\phi(y)$. 
\end{rmk}

Suppose now that $G$ is a finite group acting effectively on $Y$, and that $G$ acts on $\Ak$ preserving $Z \subset \Ak$. We may then define an action of $G$ on $\mc{A}$ by 
\[
g \cdot [y,t_y,p(x_{i,n})] = [gy, t_y \circ g^{-1}, p(x_{i,n} \circ g^{-1})] 
\]
which commutes with the flat connection $\nabla$. This $G$--equivariant structure on $\mc{A}$ allows us to descend it to a sheaf of algebras $\mc{A}_G$ over the orbicurve (or stacky curve) $[Y/G]$ equipped with a flat connection $\nabla^G$.  

Denote by 
\[
\pi: Y \rightarrow [Y/G]
\]
the projection to the quotient. Points of $[Y/G]$ will be denoted by $\wt{y}_1, \wt{y}_2, \cdots$. Given $\wt{y} \in [Y/G]$, we may write $$ \wt{y} = [y/G_y], $$ where $y \in \pi^{-1}(\wt{y})$, and $G_y$ is the stabilizer of $y$ in $G$. A point $\wt{y}$ with non-trivial $G_y$ is called a \emph{stacky point}.  It is well-known that $G_y$ cyclic, and we may choose a generator $g \in G_y$ and an etale coordinate $z^{1/m}$ in a neighborhood $U$ centered at $y$, with $m=\vert G_y \vert$, such that $z^{1/m} \circ g^{-1} = \xim z^{1/m}$. We call a coordinate possessing this property \emph{special}. The formal neighborhood of $\wt{y}$ in $[Y/G]$ can be described as the stack
\[
[\D_y/ G_y] = [\D_y / \langle g \rangle]
\]

The twisted module structure on $A^g_{\infty}, \, g \in G$ may be used to construct local sections of $\mc{A}^*_G$ on $[Y/G]$ as follows. Let
\[
\J_{\wt{y}} Z :=  \on{Hom}( [\D_y/ G_y], [Z/G_y] )
\]
$\J_{\wt{y}} Z$ is isomorphic to 
\[
\spec \C[[t^{1/m}]] \underset{\AutmO}{\times} \J^{g} Z
\]
and the choice of special formal coordinate $z^{1/m}$ allows us to identify $\J_{\wt{y}} Z$ with the twisted jet scheme $\J^{g} Z$. We thus obtain an isomorphism 
$$\lambda_{z^{1/m}}: \mc{A}_{\wt{y}}:= \mathbb{C}[\J_{\wt{y}} Z] \rightarrow A^g_{\infty}.$$ Denote by 
\[
 Y_g: A_{\infty} \rightarrow \on{End} A^g_{\infty} [[z^{1/m}, z^{-1/m}]]
\] 
the operator associated with the twisted vertex algebra module structure from Theorem \ref{twisted_theorem}, $\mc{K}_y$ the field of fractions of $ \widehat{\mathcal{O}}_y$, and $\D^{\times}_y = \spec \mc{K}_y $. We have $\mc{K}_y \simeq \mathbb{C}((z^{1/m}))$.  

We proceed to define a section 
\[
 \mc{Y}_{\wt{y}} \in \Gamma([\D^{\times}_y / G_y], \mc{A}_{G}^* \otimes \on{End}(\mc{A}_{\wt{y}})).
\]
Note that $z=(z^{1/m})^m$ is an etale coordinate on $U \backslash y$, and therefore yields a trivialization 
\[
\wt{j}_z : (U \backslash y) \times A_{\infty} \rightarrow \mc{A} \vert_{U \backslash y}
\]
which may be pulled back to $\D^{\times}_y$.  We define $\mc{Y}_{\wt{y}}$ by the property that 
\begin{equation}
 \langle \lambda^{t}_{z^{1/m}} (\phi), \mc{Y}_{\wt{y}}(\wt{j}_z (p)) \cdot \lambda^{-1}_{z^{1/m}}(q) \rangle := \langle \phi, Y_g(p,z^{1/m}) \cdot q \rangle \in \mathbb{C}((z^{1/m}))
\end{equation}
where $\phi \in (A^{g}_{\infty})^*, p \in A_{\infty}, q \in A^{g}_{\infty}$. The following is an immediate consequence of Theorem $5.1$ of \cite{FSz}, and provides a coordinate-independent description of the twisted vertex operation $Y_g$.

\begin{theorem}
The section $\mc{Y}_{\wt{y}}$ is independent of the choice of special coordinate $z^{1/m}$ and point $y \in \pi^{-1}(\wt{y})$, and thus defines a canonical element 
\[
 \mc{Y}_{\wt{y}} \in \Gamma([\D^{\times}_y / G_y], \mc{A}_{G}^* \otimes \on{End}(\mc{A}_{\wt{y}})).
\]
\end{theorem}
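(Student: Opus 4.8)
The plan is to recognize the statement as an instance of the general coordinate--independence theorem for twisted vertex operators, Theorem~5.1 of \cite{FSz}, and to verify that the pair $(A_\infty, A^g_\infty)$ satisfies its hypotheses. That theorem takes as input a quasi-conformal vertex algebra $V$ together with a compatible quasi-conformal $g$--twisted module $M$, and concludes that the locally defined section $\mc Y_{\wt y}$, built from $Y_M$ and the coordinate--dependent identification $\lambda_{z^{1/m}}$, glues to a section that is independent of the special coordinate $z^{1/m}$ and of the chosen point $y \in \pi^{-1}(\wt y)$. So my task is to supply these structures and, above all, to check the covariance of $Y_g$ under the relevant group actions.

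First I would record the quasi-conformal data, all of which is already present in Section~\ref{quasiconf_structure}. On $A_\infty$ the operators $L_n$, $n \geq 0$, act with $L_0$ semisimple assigning $x_{i,n}$ the weight $-n$; the grading is non-negative with finite--dimensional graded pieces, and each $L_n$ with $n>0$ lowers weight and is therefore locally nilpotent, so the action integrates to $\AutO$ and $A_\infty$ is quasi-conformal. The same analysis with the operators $\wt L_r$ shows that $A^g_\infty$ is a quasi-conformal module and that the $\mathrm{Lie}(\AutmO)$--action integrates to $\AutmO$. These are precisely the actions used to build the associated bundle $\mc A$, its equivariant descent $\mc A_G$, and the fibre $\mc A_{\wt y}$, and $\lambda_{z^{1/m}}$ is the comparison isomorphism attached to a special coordinate.

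The key step, which I expect to be the main obstacle, is the covariance of the generating fields: that $Y_g(x_{i,0}, z^{1/m})$ is a primary field of conformal weight zero. Using $L_r x_{i,0}=0$ for $r \geq 0$ and the explicit action $\wt L_r x_{i,n} = -(n+r)m\, x_{i,n+r}$ (for $n+r<0$), a direct reindexing of $Y_g(x_{i,0},z^{1/m}) = \sum_{n \leq 0} x_{i,n} z^{-n}$ gives
\[
\bigl[\wt L_r,\, Y_g(x_{i,0}, z^{1/m})\bigr] = m\, z^{r+1} \partial_z\, Y_g(x_{i,0}, z^{1/m}),
\]
which is exactly the covariance of a weight--zero field under the vector field $\wt L_r = -m z^{r+1}\partial_z = m L_r$. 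Because $Y_g$ is multiplicative (Remark~\ref{multiplicativity}) and satisfies $Y_g(Ta, z^{1/m}) = \partial_z Y_g(a, z^{1/m})$ by \eqref{twisted_derivative}, and because $A_\infty$ is generated as a commutative algebra with derivation by the $x_{i,0}$, the covariance propagates from the generators, with the correct weights, to all of $A_\infty$; this is the compatibility of $Y_g$ with the two group actions demanded by \cite{FSz}.

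With covariance in hand the two independence statements follow. Independence of the special coordinate is its infinitesimal content: a change of $z^{1/m}$ is realized by an element of $\AutmO$, and the covariance guarantees that the conjugation built into the definition of $\mc Y_{\wt y}$ absorbs this change, leaving the defining pairing unchanged. Independence of the choice of $y \in \pi^{-1}(\wt y)$ follows from the $G$--equivariant structure on $\mc A$ constructed prior to descent to $[Y/G]$: replacing $y$ by $hy$ for $h \in G$ carries the generator $g$ of $G_y$ to the conjugate generator $hgh^{-1}$ of $G_{hy}$ and matches the associated twisted data equivariantly. Feeding these verified hypotheses into Theorem~5.1 of \cite{FSz} produces the asserted canonical element $\mc Y_{\wt y} \in \Gamma([\D^\times_y / G_y], \mc A_G^* \otimes \End(\mc A_{\wt y}))$.
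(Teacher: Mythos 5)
Your proposal takes the same route as the paper, which simply declares the statement to be an immediate consequence of Theorem~5.1 of \cite{FSz}; you go further by actually verifying the hypotheses of that theorem (the quasi-conformal structures on $A_\infty$ and $A^g_\infty$ and the covariance of the generating fields under $\wt{L}_r$), and your reindexing computation checks out against the explicit $\mathrm{Lie}(\AutmO)$--action given in Section~\ref{quasiconf_structure}. This is a correct and somewhat more complete version of the paper's argument.
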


\begin{rmk}
We think of the stack $[\D^{\times}_y / G_y]$ as a "punctured formal disk" around $\wt{y} \in [Y/G]$. 
\end{rmk}

We now define the spaces of coinvariants  and conformal blocks for the commutative vertex algebra $A_{\infty}$ on $[Y/G]$. Let $\{ \wt{y}_1, \cdots, \wt{y}_s \}$ be a non-empty set of points of $[Y/G]$ which includes all the stacky points. 
Let $$Y^{\circ} = Y\backslash \pi^{-1}(\wt{y}_1) \cup \cdots \cup \pi^{-1}(\wt{y}_s).$$ We have $$[Y^{\circ} / G] = Y^{\circ}/G =  [Y/G] \backslash \{\wt{y}_1, \cdots, \wt{y}_s\}.$$ Let $$ \mc{A}_{out} = \Gamma([Y^{\circ}/G], \mc{A}_G \otimes \Omega^{1} ), $$  or equivalently, the $G$--invariant sections of $\mc{A} \otimes \Omega^1$ over $Y^{\circ}$. For each $\wt{y}_j$, there is a map
\[
\alpha_j: \mc{A}_{out} \rightarrow \on{End} (\mc{A}_{\wt{y}_j})
\]
given by 
\[
\omega \in \mc{A}_{out} \rightarrow Res_{\wt{y}_j} \langle \omega, \mc{Y}_{\wt{y}_j} \rangle \in \on{End} (\mc{A}_{\wt{y}_j})
\]
$\omega \in \mc{A}_{out}$ thus acts on $\mc{A}_{\wt{y}_1} \otimes \cdots \otimes \mc{A}_{\wt{y}_s}$ by
$$
s \cdot (a_1 \otimes \cdots \otimes a_s) := \sum^{s}_{j=1} a_1 \otimes \cdots \otimes \alpha_{j}(s) \cdot a_j \otimes \cdots \otimes a_s
$$

\begin{definition} 
Let $G$, $Y$, $Z$, $\wt{y}_1, \cdots, \wt{y}_s \in [Y/G]$ be as above.
The \emph{space of coinvariants} for the vertex algebra $\J Z$ on the orbicurve $[Y/G]$ is 
$$
      \coinv := \mc{A}_{\wt{y}_1} \otimes \cdots \otimes \mc{A}_{\wt{y}_s} / \mc{A}_{out} \cdot (\mc{A}_{\wt{y}_1} \otimes \cdots \otimes A_{\wt{y}_s})
$$
where $  \mc{A}_{out} \cdot (\mc{A}_{\wt{y}_1} \otimes \cdots \otimes A_{\wt{y}_s}) $ denotes the ideal generated by $\mc{A}_{out}$ in the algebra $  \mc{A}_{\wt{y}_1} \otimes \cdots \otimes A_{\wt{y}_s}). $

 The dual space
\[
\cblocks := \on{Hom}_{\mathbb{C}} (\coinv, \mathbb{C})
\]
is called the \emph{space of conformal blocks} for the vertex algebra $\J Z$ on the orbicurve $[Y/G]$.
\end{definition}

\begin{rmk} \label{coinv_is_algebra}
 $\coinv$, being the quotient of a commutative algebra by an ideal, has the structure of a $\mathbb{C}$-algebra. 
\end{rmk}

%

The definition of $\cblocks$ above is given in terms of the of the orbicurve $[Y/G]$ and the sheaf $\mc{A}_G$ on $[Y/G]$. As explained in \cite{FSz}, by using the strong residue theorem,  we may restate the definition in terms of curve $Y$ and the $G$--equivariant structure on $\mc{A}$ as follows: 

\begin{definition}
Let $G$, $Y$, $Z$, $\wt{y}_1, \cdots, \wt{y}_s \in [Y/G]$ be as above. Choose $y_j \in \pi^{-1}(\wt{y}_j)$ for $j=1, \cdots, s$.  
The space of \emph{conformal blocks} for the vertex algebra $\J Z$ on the orbicurve $[Y/G]$ is 
$$
\cblocks :=  \varphi \in (A_{\wt{y}_1} \otimes \cdots A_{\wt{y}_s})^* $$  such that  $$ \forall \, a_1 \in A_{\wt{y}_1}, \cdots, a_s \in A_{\wt{y}_s} $$
the sections 
\begin{equation}
\langle \varphi, a_1 \otimes \cdots \otimes \mc{Y}_{\wt{y}_j} \cdot a_j \otimes \cdots \otimes a_s \rangle \; \in \Gamma(\D^{\times}_{y_j}, \mc{A}^*) \; j=1, \cdots, s
\end{equation}
extend to  a single $G$--invariant horizontal section $\mc{Y}_{\varphi}$ of $\mc{A}^{*}$ on $Y^{\circ} $.
\end{definition}

\begin{rmk}
As explained in \cite{FSz}, the space $\cblocks$ is independent of the choice of $y_j \in \pi^{-1}(\wt{y}_j)$. This choice is made only to provide a concrete model of the formal punctured neighborhood of $\wt{y}_j \in [Y/G]$ as $[\D^{\times}_{y_j}/G_{y_j}]$ 
\end{rmk}



We now state our main result regarding the spaces $\coinv$. Denote by $Z^G \subset Z$ the closed sub-scheme of $G$--fixed points, and $\mathbb{C}[Z^G]$ its coordinate ring. 

\begin{theorem} \label{coinv_theorem}
The space of coinvariants $\coinv$ is isomorphic to $\mathbb{C}[Z^G]$.
\end{theorem}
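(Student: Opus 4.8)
The plan is to compute the commutative algebra $\coinv$ by identifying its spectrum with the global horizontal sections of $\mc{A}_G$ over $[Y/G]$, which by the discussion recalled in Section \ref{coinvariants} are exactly the constant $G$--equivariant maps $Y \to Z$, i.e. the points of $Z^G$. Since $\coinv = \mc{A}_{\wt{y}_1}\otimes\cdots\otimes\mc{A}_{\wt{y}_s}/I$ is the quotient of a commutative algebra by the ideal $I$ generated by the action of $\mc{A}_{out}$ (Remark \ref{coinv_is_algebra}), I would first describe its functor of points. A $\C$--point is a tuple $(\phi_1,\dots,\phi_s)$ of twisted jets $\phi_j \in \J^{g_j}Z$, that is, a collection of characters $\chi_{\phi_j}\colon \mc{A}_{\wt{y}_j}\to\C$, which annihilates $I$; since these are algebra maps, annihilating $I$ is equivalent to annihilating its generators.

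The first step is to make the generators of $I$ explicit. Because $A_\infty$ is commutative and, by Remark \ref{multiplicativity}, $Y_{g_j}$ is multiplicative, each mode of $Y_{g_j}(p,z^{1/m})$ acts on $\mc{A}_{\wt{y}_j}=\C[\J^{g_j}Z]$ by multiplication by an element of that ring; consequently the generator of $I$ attached to $\omega\in\mc{A}_{out}$ is, on the $j$--th factor, multiplication by $r_j(\omega):=\on{Res}_{\wt{y}_j} Y_{g_j}(p_j,z^{1/m})\cdot\vac$, where $\wt{j}_z(p_j)\,dz$ is the local expansion of $\omega$ at $y_j$ in a special coordinate. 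Applying a character and using the field assignment (\ref{field_1})--(\ref{field_2}), I get $\chi_{\phi_j}\big(Y_{g_j}(p,z^{1/m})\vac\big)=p[\phi_j](z)$, the value of $p\in A_\infty$ along the jet $\phi_j$ viewed as a section of $\spec\mc{A}$ over the punctured disk. Thus $(\phi_1,\dots,\phi_s)$ annihilates $I$ precisely when
\[
\sum_{j=1}^s \on{Res}_{\wt{y}_j}\big(p_j[\phi_j](z)\,dz\big)=0 \qquad\text{for every }\omega\in\mc{A}_{out}.
\]

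The heart of the argument is to recognize this family of residue identities, via the strong residue theorem in the form used in \cite{FSz}, as the statement that the $\phi_j$ are the expansions at the $\wt{y}_j$ of a single section $\Phi$ of $\spec\mc{A}\to Y$ that is regular on $Y^{\circ}$ and $G$--equivariant. Testing against forms with arbitrarily high--order poles at a fixed $y_j$ (available by Riemann--Roch) forces all higher modes of $\phi_j$ to vanish, so $\Phi$ is in fact horizontal for the connection $\nabla$. By the fact recalled in Section \ref{coinvariants}, a global horizontal section of $\mc{A}$ over the projective curve $Y$ is a constant map $Y\to Z$; evaluating $G$--equivariance on the free locus of the $G$--action shows the constant value lies in $Z^G$. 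Conversely, any $v\in Z^G$ yields the constant $G$--equivariant section whose jets at the $\wt{y}_j$ satisfy the residue identities (the ``only if'' direction of the residue theorem), so the $\C$--points of $\coinv$ are exactly $Z^G$.

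To upgrade this bijection to an isomorphism of algebras $\coinv\cong\C[Z^G]$ I would run the same analysis over an arbitrary test algebra $R$: the identification of the generators of $I$ in the first step is $R$--linear, and $\Gamma(Y,\mc{O}_Y)=\C$ guarantees that horizontal $R$--sections are constant $R$--points of $Z$ fixed by $G$, so that $\coinv(R)=Z^G(R)$ naturally in $R$; Yoneda then gives the algebra isomorphism. (Alternatively one can produce the map concretely and check it is an isomorphism by a grading argument generalizing the computation $A_\infty/TA_\infty\cong\C[Z]$ underlying the untwisted case of \cite{FBZ}.) The main obstacle I anticipate is precisely the twisted strong residue theorem of the third step: one must verify that enough global $G$--invariant sections of $\mc{A}\otimes\Omega^1$ exist to kill every fractional mode of each twisted jet, which requires matching the fractional powers of the special coordinate $z^{1/m}$ against the $\ss$--eigenspace decomposition of the coefficients so that the residues are well defined, and simultaneously that the high--pole forms force horizontality rather than merely regularity on $Y^{\circ}$.
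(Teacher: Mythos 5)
Your proposal is correct and follows essentially the same strategy as the paper: both exploit the commutative-algebra structure of $\coinv$ from Remark \ref{coinv_is_algebra}, identify the points of its spectrum (via the strong residue theorem underlying the second description of conformal blocks) with global horizontal $G$--equivariant sections of $\spec \mc{A} \rightarrow Y$, and conclude that such sections are constant maps into $Z^G$. Two steps differ in execution in ways worth noting. First, where you kill the higher modes of each twisted jet $\phi_j$ by pairing against forms with high-order poles at $y_j$ supplied by Riemann--Roch, the paper instead observes that for $p \in \C[Z] \subset A_{\infty}$ the expansion $\langle \varphi, Y_g(p,z^{1/m}) \cdot 1 \rangle$ lies in $\C[[z^{1/m}]]$, so the map $h_{\varphi}: Y^{\circ} \rightarrow Z$ extends over the marked points and is then constant because $Y$ is projective and $Z$ affine; both mechanisms are valid, and your explicit identification of the ideal generators as multiplications by $r_j(\omega)$ is accurate since all modes of the twisted fields act by multiplication. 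Second, your $R$-point/Yoneda step (or the alternative of exhibiting the map to $\C[Z^G]$ concretely) addresses a point the paper's argument leaves implicit: as written, the paper only identifies the closed points of $\spec \coinv$ with $Z^G$, which gives a bijection on $\C$--points but does not by itself rule out nilpotents or yield the asserted isomorphism of algebras, so your extra step is a genuine tightening rather than a detour.
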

\begin{proof}
By remark \ref{coinv_is_algebra}, $\coinv$ has the structure of commutative $\mathbb{C}$--algebra. Let $$ \varphi: \coinv \rightarrow \mathbb{C} $$ be a closed point of $\spec \coinv$. Then in particular, $\varphi \in \cblocks$, and we denote by $\omega_{\varphi} \in \Gamma(Y^{\circ}, \mc{A}^*) $ the $G$--invariant horizontal section of $\mc{A}^*$ which near $y_j \in \pi^{-1}(\wt{y}_j)$ agrees with 
$$
\langle \varphi, 1 \otimes \cdots \otimes \mc{Y}_{\wt{y}_j}\cdot 1 \otimes \cdots \otimes 1 \rangle \; \in \Gamma(\D^{\times}_{y_j}, \mc{A}^*) 
$$
for $ j=1, \cdots, s$. As explained in Proposition  of \cite{FBZ}, for each $y \in Y^{\circ}$
the restriction
$$ \omega_{\varphi}: \mc{A}_y \rightarrow \mathbb{C} $$
is a ring homomorphism, whose associated point of the jet scheme  $\on{Hom}(\D_y, Z)$ is the jet at $y$ of a map 
$$h_{\varphi}: Y^{\circ} \rightarrow Z . $$ 
The $G$--invariance of $\omega_{\varphi}$ ensures that $h_{\varphi}$ is $G$--invariant as well, i.e. satisfies $h_{\varphi}(g \cdot y) = g \cdot h_{\varphi}(y)$. We now show that $h_{\varphi}$ extends to all of $Y$. It follows from Remark \ref{subring_section}  that the composition 
\[
\mathbb{C}[Z] \rightarrow \mc{A}_y \overset{\omega_{\varphi}}{\rightarrow} \mathbb{C} 
\]
is simply the map 
\[
p \rightarrow p(h_{\varphi}(y))
\]
If $y_j \in \pi^{-1}(\wt{y}_j)$, then after fixing a special coordinate $z^{1/m}$ near $y_j$, we have that on $\D^{\times}_{y_j}$, 
\begin{equation} \label{ev_section}
\omega_{\varphi}( \wt{j}_z (p) ) = \langle \varphi, Y(p,z^{1/m}) \cdot 1 \rangle \in \mathbb{C}(( z^{1/m} ))
\end{equation}
However, note that for $p \in \mathbb{C}[Z] \subset A_{\infty}$, $Y(p, z^{1/m}) \in \mathbb{C}[[z^{1/m}]]$.  Thus, the limit $z^{1/m} \rightarrow 0$ is well-defined, showing that $p(h_{\varphi}(y))$ is well-defined, hence that $h_{\varphi}$ extends to $y_j$. Since $Y$ is projective, and $Z$ affine, $h_{\varphi}$ is constant. The $G$--invariance forces the image to lie in $Z^{G}$. 

\end{proof}

\begin{rmk}
When $G$ is trivial, and $s=1$, we recover a result proven in section $9.4.4$ of \cite{FBZ} identifying the space of one-point coinvariants of the vertex algebra $A_{\infty}$ on $Y$ with $\C[Z]$. 
\end{rmk}

\newpage

\address{\tiny DEPARTMENT OF MATHEMATICS AND STATISTICS, BOSTON UNIVERSITY, 111 CUMMINGTON MALL, BOSTON} \\
\indent \footnotesize{\email{szczesny@math.bu.edu}}


\begin{thebibliography}{99}

\bibitem{A} Arakawa, T.  A remark on the C2-cofiniteness condition on vertex algebras. Math. Z. 270 (2012), no. 1-2, 559?575.
\bibitem{AM} Arakawa T.; Moreau A. Sheets and associated varieties of vertex algebras. Preprint {\tt arXiv:1601.05906}
\bibitem{BD} Beilinson A.; Drinfeld V. Chiral Algebras.  American Mathematical Society Colloquium Publications, 51. American Mathematical Society, Providence, RI, 2004.
\bibitem{FBZ} Frenkel E.; Ben-Zvi David. Vertex algebras and algebraic curves. Second edition. Mathematical Surveys and Monographs, 88. American Mathematical Society, Providence, RI, 2004.
\bibitem{FSz} Frenkel E.; Szczesny M.  Twisted modules over vertex algebras on algebraic curves, Adv. Math. {\tt 187} no. 1 (2004), 195-227.
\bibitem{K} Kac, V. Vertex algebras for beginners (2nd ed.). AMS - University Lecture Series. 1998.
\bibitem{L} Li, H.  Local systems of twisted vertex operators, vertex operator superalgebras and twisted modules.  Moonshine, the Monster, and related topics (South Hadley, MA, 1994), 203?236, Contemp. Math., 193, Amer. Math. Soc., Providence, RI, 1996. 
\bibitem{LSS} Linshaw, Andrew R.; Schwarz, Gerald W.; Song, Bailin Arc spaces and the vertex algebra commutant problem. Adv. Math. 277 (2015), 338?364.


\end{thebibliography}
\end{document}